\newtheorem{theorem}{Theorem}[section]
\newtheorem{corollary}[theorem]{Corollary}
\newtheorem{identity}[theorem]{Identity}
\numberwithin{equation}{section}
\begin{document}
\title[]{Algebraic identities on q-harmonic numbers and q-binomial coefficients}
\author{ Said Zriaa and Mohammed Mou\c{c}ouf}
\address{ Said Zriaa and Mohammed Mou\c{c}ouf,
University Chouaib Doukkali.
Department of Mathematics, Faculty of science
Eljadida, Morocco}
\email{saidzriaa1992@gmail.com}
\email{moucou@hotmail.com}
\keywords{Algebraic identities, q-binomial coefficients, q-harmonic numbers, complete Bell polynomials}

\begin{abstract}
The aim of this paper is to present a general algebraic identity. Applying this identity,
we provide several formulas involving the q-binomial coefficients and the q-harmonic numbers.
We also recover some known identities including an algebraic identity of D. Y. Zheng on q-Ap\'{e}ry numbers and we establish
the q-analog of Euler's formula.
The proposed results may have important applications in the theory of q-supercongruences.

\end{abstract}
\maketitle	
\section{Introduction}
For an indeterminate $x$ the q-shifted factorial is usually defined by 
\begin{equation*}
(x;q)_{0}=1 \,\,\ \mbox{and} \,\,\ (x;q)_{n}=(1-x)(1-qx)\cdots(1-q^{n-1}x) \,\,\  \mbox{for} \,\,\  n=1,2,\ldots
\end{equation*}
The Gaussian q-binomial coefficient is correspondingly given by
\begin{equation*}
{n \brack j}=\frac{(q;q)_{n}}{(q;q)_{j}(q;q)_{n-j}} \,\,\ \mbox{for} \,\,\  j=0,1,2,\ldots,n,
\end{equation*}
\\
The q-harmonic numbers are defined by 
\begin{equation*}
\mathcal{H}_{n}(q)=\sum_{k=1}^{n}\frac{1}{[k]_{q}} \,\,\ \mbox{for} \,\,\  n=1,2,\ldots,
\end{equation*}
where the q-numbers are given by
\begin{equation*}
[k] \,\,\ \mbox{or} \,\,\ [k]_{q}:=\frac{1-q^{k}}{1-q}=1+q+\cdots+q^{k-1}
\end{equation*}
Following Comtet~\cite{Comtet}, the complete Bell polynomials can be explicitly expressed as
\begin{equation*}
\mathbf{B}_{n}(x_{1},x_{2},\cdots,x_{n})=\sum_{m_{1}+2m_{2}+\cdots+nm_{n}=n}\frac{n!}{m_{1}!m_{2}!\cdots m_{n}!}\bigg(\frac{x_{1}}{1!}\bigg)^{m_{1}}\bigg(\frac{x_{2}}{2!}\bigg)^{m_{2}}\cdots \bigg(\frac{x_{n}}{n!}\bigg)^{m_{n}}
\end{equation*}

During the last two decades, there has been an increasing interest in studying binomial sums and their q-analogues,
one can consult recent papers, for example~\cite{Wchu7,Xwang,Qyan}. For a comprehensive account of the
q-series and its applications to numbers theory, combinatorics and special functions, we refer the reader to the excellent
monograph by G. Gasper and M. Rahman \cite{Ggasper}. Recently, the study of the q-harmonic congruences turned into a very active research area (see e.g., \cite{Astraub} and the references therein).

There exist numerous combinatorial identities involving q-binomial coefficients in the mathematical literature (see e.g.,~\cite{Wchu7,Ekilic,Jxsharon,Xwang,Qyan}). 
Nowadays, there has been growing interest in deriving q-analogues of several combinatorial identities. This 
includes, for example, q-generalizations of some well known identities involving harmonic numbers.\\

The Ap\'{e}ry numbers are defined by the following binomial sum
\begin{equation*}
A(n)=\sum_{k=0}^{n}\binom{n}{k}^{2}\binom{n+k}{k}^{2}
\end{equation*}
These numbers have many interesting properties that make them extremely useful in the proof of the irrationality of $\zeta(3)$ (see~\cite{Rapery} for further details). Also the Ap\'{e}ry numbers have remarkable arithmetic properties~\cite{Igessel}. F. Beukers conjectured~\cite{Beukers} that
\begin{equation*}
A\bigg(\frac{p-1}{2}\bigg)\equiv a(p)\,\,\ (mod p^{2})
\end{equation*}
where $p$ is an odd prime and $a(n)$ is determined by 
\begin{equation*}
\sum_{k=1}^{\infty}a(k)q^{k}:=q\prod_{k=1}^{\infty}(1-q^{2k})^{4}(1-q^{4k})^{4}=q-4q^{3}-2q^{5}+24q^{7}+\cdots.
\end{equation*}
Beukers' conjecture was later showed by S. Ahlgren, K. Ono~\cite{Sahlgren}, who reduce this statement in terms of the harmonic numbers $H_{n}$ to the identity 
\begin{equation*}
\sum_{k=1}^{n}\binom{n}{k}^{2}\binom{n+k}{k}^{2}\bigg(1+2kH_{n+k}+2kH_{n-k}-4kH_{k} \bigg)=0
\end{equation*}
In order to give a classical proof of the last identity, Chu~\cite{Wchu} presented the following algebraic identity
\begin{align*}
\sum_{j=0}^{n}\binom{n}{j}^{2}\binom{n+j}{j}^{2}\bigg(\frac{-j}{(x+j)^{2}}+\frac{1+2j(H_{n+j}-H_{j})+2j(H_{n-j}-H_{j})}{x+j} \bigg)&=\frac{x(1-x)_{n}^{2}}{(x)_{n+1}^{2}}
\end{align*}
which gives the desired formula in the limit.\\
\indent In order to prove irrationality results on the q-analog of $\zeta(3)$:
\begin{equation*}
\zeta_{q}(3)=\sum_{k=1}^{+\infty}\frac{q^{k}(1+q^{k})}{(1-q^{k})^{3}}  
\end{equation*}
The authors of~\cite{Ckrattenthaler} introduced implicitly a q-analog of the Ap\'{e}ry numbers $A_{q}^{KRZ}(n)$
and they showed that
\begin{equation*}
A_{q}^{KRZ}(n)=\sum_{k=0}^{n}\frac{a_{q}(n,k)}{q^{k}}
\end{equation*}
where $a_{q}(n,k)$ can be defined via the following q-partial fraction decomposition
\begin{equation}\label{eq: Ckra}
\frac{(xq^{-n};q)_{n}^{2}}{(x;q)_{n+1}^{2}}=\sum_{k=0}^{n}\bigg(\frac{a_{q}(n,k)}{(1-q^{k}x)^{2}}+\frac{b_{q}(n,k)}{1-q^{k}x} \bigg)
\end{equation} 
A. Straub~\cite{Astraub} showed via the partial fraction decomposition technique that $A_{q}^{KRZ}(n)$ have the following explicit q-binomial representation
\begin{equation*}
q^{n(2n+1)}A_{q}^{KRZ}(n)=\sum_{k=0}^{n}q^{(n-k)^{2}}{n \brack k}^{2}{n+k\brack k}^{2}
\end{equation*} 
which reduces to Ap\'{e}ry numbers when $q\rightarrow1$.\\
\indent D. Y. Zheng~\cite{Dyzheng} has recently introduced the q-Ap\'{e}ry numbers
\begin{equation*}
q^{n(n+1)}A_{q}^{KRZ}(n)=\sum_{k=0}^{n}q^{k(k-2n)}{n \brack k}^{2}{n+k\brack k}^{2}
\end{equation*}
then he established an interesting algebraic identity
{\small
\begin{equation}\label{eq: Zheng}
\frac{x^{2n}(q/x;q)_{n}^{2}}{(1-x)(xq;q)_{n}^{2}}=\frac{1}{1-x}+\sum_{j=1}^{n}{n \brack j}^{2}q^{j(j-2n)}\bigg(\frac{q^{j}-1}{(1-xq^{j})^{2}}+\frac{1-4[j]\mathcal{H}_{j}(q)+2[j]\mathcal{H}_{n+j}(q)+2q[j]\mathcal{H}_{n-j}(q^{-1})}{1-xq^{j}}\bigg)
\end{equation}
}
which is a q-extension of the Chu's identity, and obtained the identity
\begin{equation*}
\sum_{k=0}^{n}q^{k(k-2n)}{n \brack k}^{2}{n+k\brack k}^{2}\bigg(2\mathcal{H}_{k}(q)-\mathcal{H}_{n+k}(q)-q\mathcal{H}_{n-k}(q^{-1}) \bigg)=0
\end{equation*}
The purpose of this paper is to establish and develop important algebraic identities involving q-harmonic numbers and q-binomial coefficients, which may have important applications in the theory of q-supercongruences.\\
\section{Some identities of q-binomial coefficients}
We now state and prove one of the main results of this paper.
\begin{theorem}\label{thm: ccc}
Let $\alpha_{1},\alpha_{2},\ldots,\alpha_{s}$ be distinct elements of $\mathbb{C}$. For a positive integer $m$ let $P(x)=(x-\alpha_{1})^{m}(x-\alpha_{2})^{m}\cdots(x-\alpha_{s})^{m}$. For any polynomial $Q(x)$ such that $\deg(Q)<\deg(P)$, we have
\begin{equation}\label{eq: eee}
\frac{Q(x)}{P(x)}=\sum_{j=1}^{s}\sum_{i=0}^{m-1}\sum_{k=0}^{m-1-i}\frac{(-1)^{k}g_{j}(\alpha_{j})\mathbf{B}_{k}(x_{1},\cdots,x_{k})Q^{(i)}(\alpha_{j})}{i!k!(x-\alpha_{j})^{m-i-k}}.
\end{equation}
where
\begin{equation*}
x_{l}=m(l-1)!\sum_{i=1,i\neq j}^{s}\frac{1}{(\alpha_{j}-\alpha_{i})^{l}} \,\,\ \mbox{and} \,\,\ g_{j}(x)=\prod_{i=1,i\neq j}^{s}(x-\alpha_{i})^{-m}.
\end{equation*}
\end{theorem}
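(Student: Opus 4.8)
The plan is to recognize the right-hand side of \eqref{eq: eee} as a re-indexed form of the classical partial fraction expansion of $Q/P$, and then to compute the relevant coefficients explicitly. Since every root $\alpha_j$ of $P$ has multiplicity $m$, the decomposition reads $\frac{Q(x)}{P(x)} = \sum_{j=1}^{s}\sum_{r=1}^{m}\frac{c_{j,r}}{(x-\alpha_j)^{r}}$ for suitable constants $c_{j,r}$. Multiplying by $(x-\alpha_j)^{m}$ and noting that $(x-\alpha_j)^{m}\frac{Q(x)}{P(x)} = Q(x)g_j(x)$ is analytic at $\alpha_j$, while the contributions of the other poles vanish to order $m$ there, I read off $c_{j,r}$ from the Taylor coefficient of $Q(x)g_j(x)$ at $\alpha_j$, namely $c_{j,r} = \frac{1}{(m-r)!}\big[Q\,g_j\big]^{(m-r)}(\alpha_j)$. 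Writing $r = m-i-k$ will later convert the single index $r$ into the pair $(i,k)$ appearing in \eqref{eq: eee}.

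Next I would apply the Leibniz rule to the $(m-r)$-th derivative of the product $Q\,g_j$. This gives $c_{j,r} = \sum_{i+k=m-r}\frac{Q^{(i)}(\alpha_j)\,g_j^{(k)}(\alpha_j)}{i!\,k!}$, where the sum runs over $i,k\ge 0$. Thus the whole problem is reduced to evaluating the higher derivatives $g_j^{(k)}(\alpha_j)$ of the single factor $g_j(x)=\prod_{i\ne j}(x-\alpha_i)^{-m}$.

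The key step, and the one I expect to be the main obstacle, is to show that $g_j^{(k)}(\alpha_j) = (-1)^{k} g_j(\alpha_j)\,\mathbf{B}_k(x_1,\dots,x_k)$ with the $x_l$ as in the statement. To this end I write $g_j = \e^{h}$ with $h(x) = -m\sum_{i\ne j}\log(x-\alpha_i)$ and invoke the standard exponential formula for complete Bell polynomials, $\big(\e^{h}\big)^{(k)} = \e^{h}\,\mathbf{B}_k\big(h',h'',\dots,h^{(k)}\big)$. A direct computation of the logarithmic derivatives yields $h^{(l)}(\alpha_j) = (-1)^{l}\,m(l-1)!\sum_{i\ne j}(\alpha_j-\alpha_i)^{-l} = (-1)^{l}x_l$. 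Finally, the explicit expansion of $\mathbf{B}_k$ given in the introduction shows that it is isobaric: replacing each $x_l$ by $(-1)^{l}x_l$ multiplies every monomial by $(-1)^{\sum_l l\,m_l} = (-1)^{k}$, so $\mathbf{B}_k\big((-1)^{1}x_1,\dots,(-1)^{k}x_k\big) = (-1)^{k}\mathbf{B}_k(x_1,\dots,x_k)$. Combining these three facts gives the claimed formula for $g_j^{(k)}(\alpha_j)$.

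It then remains to substitute this evaluation into the expression for $c_{j,r}$ and to re-index. For fixed $j$, letting $(i,k)$ range over all pairs with $i,k\ge 0$ and $i+k\le m-1$ and setting $r=m-i-k$ recovers precisely the triple sum in \eqref{eq: eee}, completing the proof. The only delicate point is keeping the signs and the scaling of the Bell-polynomial arguments consistent; once the isobaric identity above is in hand, the rest is bookkeeping.
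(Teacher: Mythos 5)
Your proof is correct, and its second half---evaluating $g_j^{(k)}(\alpha_j)$ by writing $g_j=\exp(h)$ with $h(x)=-m\sum_{i\neq j}\log(x-\alpha_i)$, computing $h^{(l)}(\alpha_j)=(-1)^{l}x_l$, and using the isobaric property of $\mathbf{B}_k$ to pull out the factor $(-1)^{k}$---is essentially the same computation the paper performs, phrased there as Fa\`a di Bruno applied to $\phi\circ f_j$ with $\phi(x)=\exp(mx)$ and $f_j(x)=\ln\prod_{i\neq j}(x-\alpha_i)^{-1}$. Where you genuinely diverge is in how you reach the intermediate identity $\frac{Q(x)}{P(x)}=\sum_{j,i,k}\frac{g_j^{(k)}(\alpha_j)\,Q^{(i)}(\alpha_j)}{i!\,k!\,(x-\alpha_j)^{m-i-k}}$: the paper imports it from two results of the authors' earlier work \cite{Mmous} on Hermite interpolation and confluent Vandermonde matrices (the expansion $Q=\sum_{j,i}\frac{1}{i!}Q^{(i)}(\alpha_j)L_{ji}(x)[P]$ together with a closed form for $L_{ji}(x)[P]$), whereas you derive it self-containedly from the classical partial-fraction coefficient formula $c_{j,r}=\frac{1}{(m-r)!}\bigl[Q\,g_j\bigr]^{(m-r)}(\alpha_j)$ followed by the Leibniz rule, with the re-indexing $r=m-i-k$ matching the theorem's range $i,k\ge 0$, $i+k\le m-1$ exactly. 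Your route is more elementary and makes the theorem independent of the interpolation machinery; the paper's route buys consistency with its earlier framework but nothing additional for this particular statement. The sign bookkeeping and the justification that the other poles contribute nothing to the Taylor coefficients of order below $m$ at $\alpha_j$ are both handled correctly.
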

\begin{proof}
Following~\cite[Eq.4]{Mmous}, we can write
\begin{equation*}
Q(x)=\sum_{j=1}^{s}\sum_{i=0}^{m-1}\frac{1}{i!}Q^{(i)}(\alpha_{j})L_{ji}(x)[P].
\end{equation*}
By virtue of Equation $(2)$ of~\cite{Mmous}, it is clear that
\begin{equation*}
L_{ji}(x)[P]=P(x)\sum_{k=0}^{m-1-i}\frac{g^{(k)}_{j}(\alpha_{j})}{k!(x-\alpha_{j})^{m-i-k}}
\end{equation*}
If we combine the two previous identities, we get
\begin{equation*}
\frac{Q(x)}{P(x)}=\sum_{j=1}^{s}\sum_{i=0}^{m-1}\sum_{k=0}^{m-1-i}\frac{g_{j}^{(k)}(\alpha_{j})Q^{(i)}(\alpha_{j})}{i!k!(x-\alpha_{j})^{m-i-k}}.
\end{equation*}
Since
\begin{equation*}
g_{j}(x)=\phi(x)\circ f_{j}(x)
\end{equation*}
where $\phi(x)=\exp(mx)$ and $f_{j}(x)=\ln(\prod_{i=1,i\neq j}^{s}(x-\alpha_{i})^{-1})$. Then $\phi^{(k)}(x)=m^{k}\exp(mx)$ and $f^{(k)}_{j}(x)=(-1)^{k}(k-1)!\mathcal{H}_{k,\mathcal{\alpha}_{s}[j]}(x)$, where $\mathcal{H}_{l,\mathcal{\alpha}_{s}[j]}(x)=\sum_{i=1,i\neq j}^{s}\frac{1}{(x-\alpha_{i})^{l}}$. By using the Fa\`{a} di Bruno formula, we can easily prove that
\begin{equation*}
g_{j}^{(k)}(x)=(-1)^{k}g_{j}(x)\sum_{m_{1}+2m_{2}+\cdots+km_{k}=k}\frac{k!}{m_{1}!m_{2}!\cdots m_{k}!}\prod_{l=1}^{k}\bigg(\frac{m(l-1)!\mathcal{H}_{l,\mathcal{\alpha}_{s}[j]}(x)}{l!}\bigg)^{m_{l}}
\end{equation*}
In particular
\begin{equation*}
g_{j}^{(k)}(\alpha_{j})=(-1)^{k}g_{j}(\alpha_{j})\mathbf{B}_{k}(x_{1},\cdots,x_{k})
\end{equation*}
This gives the required result.
\end{proof}
Taking $\alpha_{i}=q^{-i}$ in Theorem~\eqref{thm: ccc}, we obtain after some minor manipulations the following theorem.
\begin{theorem}\label{thm: bbbb}
Let $m$ and $n$ positive integers and let $Q(x)$ be a polynomial such that $\deg(Q)<(n+1)m$, we have
\begin{equation*}
\frac{(q;q)_{n}^{m}Q(x)}{(x;q)_{n+1}^{m}}=\sum_{j=0}^{n}{n \brack j}^{m}q^{m\binom{j+1}{2}}\sum_{i=0}^{m-1}\sum_{k=0}^{m-1-i}\frac{(-1)^{mj+i}\mathbf{B}_{k}(x_{1},x_{2},\cdots,x_{k})Q^{(i)}(q^{-j})}{i!k!q^{j(i+k)}(1-xq^{j})^{m-i-k}}.
\end{equation*}
where
\begin{equation*}
x_{1}=\frac{q^{j}m}{1-q}\bigg(\mathcal{H}_{j}(q)-q\mathcal{H}_{n-j}(q^{-1})\bigg)
\end{equation*}
and
\begin{equation*}
x_{l}=m(l-1)!\sum_{i=0,i\neq j}^{n}\frac{q^{jl}}{(1-q^{j-i})^{l}}  \,\,\ \mbox{for} \,\,\  l=1,2,\ldots,m-1,
\end{equation*}
\end{theorem}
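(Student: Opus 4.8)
The plan is to specialize Theorem~\ref{thm: ccc} to the nodes $\alpha_i = q^{-i}$, letting the index run over $\{0,1,\ldots,n\}$ (so that $s=n+1$), and then to translate every ingredient of the general partial fraction expansion~\eqref{eq: eee} into $q$-notation. First I would record the conversion of the denominator: since $x-q^{-i} = -q^{-i}(1-q^{i}x)$, a short computation gives
\[
P(x)=\prod_{i=0}^{n}(x-q^{-i})^{m}=(-1)^{m(n+1)}q^{-m\binom{n+1}{2}}(x;q)_{n+1}^{m},
\]
so that $\dfrac{(q;q)_{n}^{m}Q(x)}{(x;q)_{n+1}^{m}}$ equals $(q;q)_{n}^{m}(-1)^{m(n+1)}q^{-m\binom{n+1}{2}}\,\dfrac{Q(x)}{P(x)}$, to which~\eqref{eq: eee} applies verbatim.

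Next I would simplify the two node-dependent data in~\eqref{eq: eee}. For the factor $g_{j}(\alpha_{j})=\prod_{i\ne j}(q^{-j}-q^{-i})^{-m}$ I would use $q^{-j}-q^{-i}=q^{-j}(1-q^{j-i})$ and split the product over $i<j$ and $i>j$; the first block yields $(q;q)_{j}$, and the second, after extracting $1-q^{-d}=-q^{-d}(1-q^{d})$, yields $(-1)^{n-j}q^{-\binom{n-j+1}{2}}(q;q)_{n-j}$. This gives a closed form for $g_{j}(\alpha_{j})$ as a signed power of $q$ times $(q;q)_{j}^{-m}(q;q)_{n-j}^{-m}$. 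For the higher coefficients I would use $(q^{-j}-q^{-i})^{-l}=q^{jl}(1-q^{j-i})^{-l}$, which is already the stated formula for $x_{l}$ with $l\ge 1$.

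The one genuinely new identity to check is the evaluation of $x_{1}$ in terms of $q$-harmonic numbers. Here I would split $\sum_{i\ne j}(1-q^{j-i})^{-1}$ into the ranges $i<j$ and $i>j$ and invoke the two elementary summations $\sum_{d=1}^{j}(1-q^{d})^{-1}=\mathcal{H}_{j}(q)/(1-q)$ and $\sum_{d=1}^{n-j}(1-q^{-d})^{-1}=-\,q\,\mathcal{H}_{n-j}(q^{-1})/(1-q)$, both immediate from $[k]_{q}=(1-q^{k})/(1-q)$ (and its $q^{-1}$ analogue). Combining them and multiplying by $m q^{j}$ produces the claimed expression for $x_{1}$.

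Finally I would assemble the pieces. Converting $(x-q^{-j})^{m-i-k}$ through $x-q^{-j}=-q^{-j}(1-q^{j}x)$ contributes further signs and $q$-powers, and the main obstacle, which is purely bookkeeping, is to verify that all the exponents of $-1$, all the powers of $q$, and the $(q;q)$-factors coalesce into ${n\brack j}^{m}q^{m\binom{j+1}{2}}(-1)^{mj+i}q^{-j(i+k)}$. The sign collapses because $(-1)^{2m+2mn}=1$; the $q$-exponent reduces, after cancellation, to the binomial identity $jn+\binom{n-j+1}{2}+j-\binom{n+1}{2}=\binom{j+1}{2}$; and the surviving $(q;q)$-factors $\,(q;q)_{n}^{m}(q;q)_{j}^{-m}(q;q)_{n-j}^{-m}\,$ combine into ${n\brack j}^{m}$. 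This yields the stated formula.
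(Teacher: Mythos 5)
Your proposal is correct and follows exactly the route the paper takes: the paper's entire proof of this theorem is the one-line remark that one substitutes $\alpha_{i}=q^{-i}$ into Theorem~\ref{thm: ccc} and performs ``some minor manipulations,'' and your write-up simply carries out those manipulations (the conversion $x-q^{-i}=-q^{-i}(1-q^{i}x)$, the evaluation of $g_{j}(\alpha_{j})$, the harmonic-number form of $x_{1}$, and the exponent identity $jn+\binom{n-j+1}{2}+j-\binom{n+1}{2}=\binom{j+1}{2}$) in full detail, all of which check out. No discrepancies with the paper's intended argument.
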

In view of Theorem~\eqref{thm: bbbb}, we establish interesting corollaries
\begin{corollary}\label{cor: algebaric}
Let $n$ be a positive integer and let $Q(x)$ be a polynomial such that $\deg(Q)<2(n+1)$. We have
\begin{equation*} 
\frac{(q;q)_{n}^{2}Q(x)}{(x;q)_{n+1}^{2}}=\sum_{j=0}^{n}{n \brack j}^{2}q^{j(j+1)}\bigg(\frac{Q(q^{-j})}{(1-xq^{j})^{2}}-\frac{q^{-j}Q^{'}(q^{-j})}{(1-xq^{j})}+\frac{2Q(q^{-j})}{(1-xq^{j})}\bigg(\frac{\mathcal{H}_{j}(q)-q\mathcal{H}_{n-j}(q^{-1})}{1-q}\bigg)\bigg)
\end{equation*}
\end{corollary}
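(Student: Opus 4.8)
The plan is to obtain Corollary~\ref{cor: algebaric} as the direct specialization of Theorem~\ref{thm: bbbb} to the case $m=2$; no new tools are needed, since the corollary is exactly that specialization written out in closed form. First I would set $m=2$ in the master formula of Theorem~\ref{thm: bbbb}. The outer factor $q^{m\binom{j+1}{2}}$ becomes $q^{2\binom{j+1}{2}}=q^{j(j+1)}$, which already matches the prefactor in the corollary, and the q-binomial ${n\brack j}^{m}$ becomes ${n\brack j}^{2}$.

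Next I would expand the finite double sum $\sum_{i=0}^{m-1}\sum_{k=0}^{m-1-i}$. With $m=2$ the index $i$ ranges over $\{0,1\}$ and $k$ over $\{0,\dots,1-i\}$, so only the three pairs $(i,k)\in\{(0,0),(0,1),(1,0)\}$ survive. For each pair I would evaluate the ingredients explicitly: the empty and first complete Bell polynomials $\mathbf{B}_{0}()=1$ and $\mathbf{B}_{1}(x_{1})=x_{1}$; the sign $(-1)^{2j+i}$, which equals $+1$ when $i=0$ and $-1$ when $i=1$ since $2j$ is even; the factorials $i!\,k!$; and the denominator factor $q^{j(i+k)}$ together with $(1-xq^{j})^{2-i-k}$. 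This turns the three surviving contributions into $\tfrac{Q(q^{-j})}{(1-xq^{j})^{2}}$ from $(0,0)$, into $\tfrac{x_{1}Q(q^{-j})}{q^{j}(1-xq^{j})}$ from $(0,1)$, and into $-\tfrac{q^{-j}Q'(q^{-j})}{1-xq^{j}}$ from $(1,0)$.

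Finally I would substitute the stated value $x_{1}=\tfrac{2q^{j}}{1-q}\bigl(\mathcal{H}_{j}(q)-q\,\mathcal{H}_{n-j}(q^{-1})\bigr)$ into the $(0,1)$ term; the factor $q^{j}$ carried by $x_{1}$ cancels the $q^{-j}$ coming from $q^{j(i+k)}$, producing $\tfrac{2Q(q^{-j})}{1-xq^{j}}\cdot\tfrac{\mathcal{H}_{j}(q)-q\mathcal{H}_{n-j}(q^{-1})}{1-q}$. Summing the three contributions over $j$ then reproduces the right-hand side of the corollary verbatim. The only place demanding any care—and the nearest thing to an obstacle—is the bookkeeping: correctly reading off $\mathbf{B}_{1}(x_{1})=x_{1}$, tracking the parity in $(-1)^{2j+i}$, and verifying that the powers of $q^{j}$ arising from both the $q^{j(i+k)}$ factor and from $x_{1}$ cancel exactly as claimed. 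Everything beyond that is a routine substitution.
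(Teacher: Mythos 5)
Your proof is correct and matches the paper's (implicit) argument exactly: the corollary is stated as the $m=2$ specialization of Theorem~\ref{thm: bbbb}, and your expansion of the three index pairs $(0,0)$, $(0,1)$, $(1,0)$ with $\mathbf{B}_{0}=1$, $\mathbf{B}_{1}(x_{1})=x_{1}$, and the stated value of $x_{1}$ reproduces the right-hand side verbatim.
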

\begin{identity}
Setting $Q(x)=1$ in the last corollary, we obtain
\begin{equation*} 
\frac{(q;q)_{n}^{2}}{(x;q)_{n+1}^{2}}=\sum_{j=0}^{n}{n \brack j}^{2}q^{j(j+1)}\bigg(\frac{1}{(1-xq^{j})^{2}}+\frac{2}{1-xq^{j}}\bigg(\frac{\mathcal{H}_{j}(q)-q\mathcal{H}_{n-j}(q^{-1})}{1-q}\bigg)\bigg)
\end{equation*}
In particular
\begin{equation*} 
(q;q)_{n}^{2}=\sum_{j=0}^{n}{n \brack j}^{2}q^{j(j+1)}\bigg(1+2\bigg(\frac{\mathcal{H}_{j}(q)-q\mathcal{H}_{n-j}(q^{-1})}{1-q}\bigg)\bigg)
\end{equation*}
\end{identity}
\begin{identity}
\begin{equation*}
\frac{x^{2n}(q/x;q)_{n}^{2}}{(1-x)(xq;q)_{n}^{2}}=\frac{1}{1-x}+\sum_{j=1}^{n}{n \brack j}^{2}q^{j(j-2n)}\bigg(\frac{q^{j}-1}{(1-xq^{j})^{2}}+\frac{1-4[j]\mathcal{H}_{j}(q)+2[j]\mathcal{H}_{n+j}(q)+2q[j]\mathcal{H}_{n-j}(q^{-1})}{1-xq^{j}}\bigg)
\end{equation*}
This identity recovers Zheng identity~\eqref{eq: Zheng}.
\end{identity}
\begin{proof}
Let 
\begin{equation*}
Q(x)=(1-x)(x-q)^{2}\cdots(x-q^{n})^{2}=(1-x)x^{2n}(q/x;q)_{n}^{2}
\end{equation*}
It is not difficult to verify that
\begin{equation*}
\frac{Q(x)}{(x;q)_{n+1}^{2}}=\frac{x^{2n}(q/x;q)_{n}^{2}}{(1-x)(xq;q)_{n}^{2}} \,\,\ , \,\,\  Q(q^{-j})=q^{-j(2n+1)}(q^{j}-1)(q;q)_{n}^{2}{n+j \brack j}^{2}
\end{equation*}
and
\begin{align*}
Q^{'}(q^{-j})=q^{-2nj}(q;q)_{n}^{2}{n+j \brack j}^{2}\bigg(-1-2[j]\bigg(\mathcal{H}_{n+j}(q)-\mathcal{H}_{j}(q)\bigg) \bigg)\\
\end{align*}
Applying Corollary~\eqref{cor: algebaric}, we get after some simplifications the desired identity.
\end{proof}
\begin{identity}
\begin{equation*}
\frac{(xq^{-n};q)_{n}^{2}}{(x;q)_{n+1}^{2}}=\sum_{j=0}^{n}{n \brack j}^{2}{n+j \brack j}^{2}q^{j(j+1)-n(n+2j+1)}\bigg(\frac{1}{(1-xq^{j})^{2}}+\frac{4\mathcal{H}_{j}(q)-2q\mathcal{H}_{n-j}(q^{-1})-2\mathcal{H}_{n+j}(q)}{(1-q)(1-xq^{j})}\bigg)
\end{equation*}
This identity gives the explicit representation of~\eqref{eq: Ckra}. 
\end{identity}
\begin{proof}
Let $Q(x)=(xq^{-n};q)_{n}^{2}$. We have $Q(q^{-j})=q^{-n(n+2j+1)}{n+j \brack j}^{2}$ and 
\begin{equation*}
Q^{'}(q^{-j})=2q^{-n(n+2j+1)+j}{n+j \brack j}^{2}\bigg(\frac{\mathcal{H}_{n+j}(q)-\mathcal{H}_{j}(q)}{1-q} \bigg)
\end{equation*}
Using Corollary~\eqref{cor: algebaric}, the identity follows.
\end{proof}
\begin{identity}
Letting $x=0$ in the last identity, we obtain
\begin{equation*}
\sum_{j=0}^{n}{n \brack j}^{2}{n+j \brack j}^{2}q^{j(j+1)-2nj}\bigg(1-q+4\mathcal{H}_{j}(q)-2q\mathcal{H}_{n-j}(q^{-1})-2\mathcal{H}_{n+j}(q)\bigg)=q^{n(n+1)}(1-q)
\end{equation*}
\end{identity}
\begin{corollary}
Let $m, n, l$ be tree positive integers such that $0\leq l<(n+1)m$. Then we have 
\begin{equation*}
\frac{(q;q)_{n}^{m}x^{l}}{(x;q)_{n+1}^{m}}=\sum_{j=0}^{n}{n \brack j}^{m}q^{m\binom{j+1}{2}}\sum_{i=0}^{m-1}\sum_{k=0}^{m-1-i}\binom{l}{i}\frac{(-1)^{mj+i}\mathbf{B}_{k}(x_{1},x_{2},\cdots,x_{k})}{k!q^{j(k+l)}(1-xq^{j})^{m-i-k}}.
\end{equation*}
where
\begin{equation*}
x_{l}=m(l-1)!\sum_{i=0,i\neq j}^{n}\frac{q^{jl}}{(1-q^{j-i})^{l}}
\end{equation*}
\end{corollary}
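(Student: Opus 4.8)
The plan is to specialize Theorem~\ref{thm: bbbb} to the monomial $Q(x)=x^{l}$. Since $\deg(Q)=l<(n+1)m$ by hypothesis, the theorem applies, and the entire task reduces to computing $Q^{(i)}(q^{-j})$ and absorbing the resulting powers of $q$ into the exponent already present in the general formula.

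First I would record the derivatives of the monomial. For $Q(x)=x^{l}$ we have $Q^{(i)}(x)=\frac{l!}{(l-i)!}x^{l-i}$ when $0\le i\le l$ and $Q^{(i)}\equiv 0$ when $i>l$. Evaluating at the node $x=q^{-j}$ yields $Q^{(i)}(q^{-j})=\frac{l!}{(l-i)!}q^{-j(l-i)}$, and hence $\frac{Q^{(i)}(q^{-j})}{i!}=\binom{l}{i}q^{-j(l-i)}$. Substituting this into Theorem~\ref{thm: bbbb}, the only genuine simplification is the bookkeeping of the $j$-dependent powers of $q$: the factor $q^{-j(l-i)}$ arising from $Q^{(i)}(q^{-j})$ combines with the factor $q^{-j(i+k)}$ already in the denominator of the summand to give $q^{-j(l-i)-j(i+k)}=q^{-j(k+l)}$, which is precisely the factor $q^{j(k+l)}$ in the denominator of the claimed identity. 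All remaining factors---the sign $(-1)^{mj+i}$, the prefactor ${n\brack j}^{m}q^{m\binom{j+1}{2}}$, the Bell polynomial $\mathbf{B}_{k}$, and the power $(1-xq^{j})^{m-i-k}$---carry over unchanged, and the arguments $x_{1},\ldots,x_{m-1}$ do not depend on the choice of $Q$, so they remain exactly as defined in Theorem~\ref{thm: bbbb}.

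Finally, I would observe that one may freely let the inner index $i$ run over the full range $0\le i\le m-1$, as written in the corollary, rather than only over $0\le i\le\min(l,m-1)$. Indeed, for $i>l$ the binomial coefficient $\binom{l}{i}$ vanishes, in exact agreement with the vanishing of the derivatives $Q^{(i)}$ for $i>l$; thus the two expressions coincide term by term. There is essentially no obstacle in this argument, as it is a routine specialization; the only point deserving a moment's care is this consolidation of the $q$-powers together with the remark that $\binom{l}{i}$ automatically encodes the vanishing of $Q^{(i)}$, which is what permits the clean index range stated in the corollary.
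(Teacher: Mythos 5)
Your specialization of Theorem~\ref{thm: bbbb} to $Q(x)=x^{l}$, with $\frac{Q^{(i)}(q^{-j})}{i!}=\binom{l}{i}q^{-j(l-i)}$ and the consolidation $q^{-j(l-i)}\cdot q^{-j(i+k)}=q^{-j(k+l)}$, is exactly the intended (and correct) derivation; the paper states this corollary as an immediate consequence of the theorem without further proof. Your remark that $\binom{l}{i}=0$ for $i>l$ accounts for the full index range $0\le i\le m-1$ is a worthwhile clarification the paper leaves implicit.
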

\begin{corollary}
Let $m$ and $n$ be positive integers. We have
\begin{equation*}
\frac{(q;q)_{n}^{m}}{(x;q)_{n+1}^{m}}=\sum_{j=0}^{n}{n \brack j}^{m}q^{m\binom{j+1}{2}}\sum_{k=0}^{m-1}\frac{(-1)^{mj}\mathbf{B}_{k}(x_{1},x_{2},\cdots,x_{k})}{k!q^{jk}(1-xq^{j})^{m-k}}.
\end{equation*}
where
\begin{equation*}
x_{l}=m(l-1)!\sum_{i=0,i\neq j}^{n}\frac{q^{jl}}{(1-q^{j-i})^{l}}
\end{equation*}

\end{corollary}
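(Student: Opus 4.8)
The plan is to derive this corollary directly from Theorem~\eqref{thm: bbbb} by specializing the free polynomial $Q$ to the constant polynomial $Q(x)=1$. First I would check the hypothesis: since $\deg(1)=0<(n+1)m$ holds for all positive integers $m,n$, the theorem applies with no restriction. The decisive simplification comes from the derivatives of a constant: $Q^{(0)}(q^{-j})=1$, while $Q^{(i)}(q^{-j})=0$ for every $i\geq 1$. Consequently, in the triple sum appearing in Theorem~\eqref{thm: bbbb}, every summand with $i\geq 1$ vanishes, and only the layer $i=0$ survives.

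Next I would substitute $i=0$ into the surviving terms and track the index bookkeeping. With $i=0$ one has $1/i!=1$, $(-1)^{mj+i}=(-1)^{mj}$, $q^{j(i+k)}=q^{jk}$, and $(1-xq^{j})^{m-i-k}=(1-xq^{j})^{m-k}$; moreover the inner summation range $0\leq k\leq m-1-i$ becomes $0\leq k\leq m-1$. Collecting these, the double sum over $i$ and $k$ collapses to the single sum over $k$ displayed in the statement, while the expressions for the $x_{l}$ are inherited verbatim from Theorem~\eqref{thm: bbbb}. This produces exactly the asserted identity.

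There is essentially no genuine obstacle here: the whole argument is a clean specialization, and the only point requiring a moment of care is confirming that the higher derivatives of the constant polynomial vanish, so that the $i$-summation truncates to its first term. Equivalently, one could recover the same identity by setting $l=0$ in the preceding corollary and using $\binom{0}{i}=\delta_{i,0}$, which again annihilates all terms with $i\geq 1$ and leaves the stated single sum over $k$.
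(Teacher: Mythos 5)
Your proposal is correct and is exactly the intended derivation: the paper presents this corollary as an immediate specialization of Theorem~\ref{thm: bbbb} (equivalently, the $l=0$ case of the preceding corollary), and setting $Q(x)=1$ kills all terms with $i\geq 1$ and collapses the triple sum to the stated single sum over $k$.
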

\begin{corollary}\label{cor: SSS}
Let $Q(x)$ be a polynomial such that $\deg(Q)<n+1$. Then we have
\begin{equation*}
\frac{(q;q)_{n}Q(x)}{(x;q)_{n+1}}=\sum_{j=0}^{n}(-1)^{j}{n \brack j }q^{\binom{j+1}{2}}\frac{Q(q^{-j})}{(1-xq^{j})}.
\end{equation*}
\end{corollary}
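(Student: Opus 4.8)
The plan is to recognize this corollary as the case $m=1$ of Theorem~\ref{thm: bbbb} and to check that the triple sum there collapses to the single sum displayed here. When $m=1$ the inner index $i$ ranges only over $0\le i\le m-1=0$ and $k$ over $0\le k\le m-1-i=0$, so the sole surviving term is $i=k=0$. Using $\mathbf{B}_{0}=1$ (the empty complete Bell polynomial), $0!=1$, $Q^{(0)}=Q$, the factor $q^{j(i+k)}=1$, and $(1-xq^{j})^{m-i-k}=(1-xq^{j})$, the general summand reduces to $(-1)^{j}{n\brack j}q^{\binom{j+1}{2}}Q(q^{-j})/(1-xq^{j})$, which is exactly the claimed term. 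Thus the corollary is immediate from Theorem~\ref{thm: bbbb}.

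For a self-contained argument I would instead prove it directly by partial fractions. Since $\deg(Q)<n+1$ and $(x;q)_{n+1}=\prod_{j=0}^{n}(1-q^{j}x)$ has the $n+1$ distinct simple zeros $x=q^{-j}$ for $0\le j\le n$, the rational function decomposes as $Q(x)/(x;q)_{n+1}=\sum_{j=0}^{n}A_{j}/(1-q^{j}x)$, and the residue rule gives $A_{j}=Q(q^{-j})/\prod_{i=0,\,i\neq j}^{n}(1-q^{i-j})$. The one genuine computation is to evaluate this product. I would split it according to $i>j$ and $i<j$: the factors with $i>j$ give $\prod_{l=1}^{n-j}(1-q^{l})=(q;q)_{n-j}$, while each factor with $i<j$ is rewritten through $1-q^{-l}=-q^{-l}(1-q^{l})$, producing $\prod_{l=1}^{j}(1-q^{-l})=(-1)^{j}q^{-\binom{j+1}{2}}(q;q)_{j}$. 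Hence $\prod_{i\neq j}(1-q^{i-j})=(-1)^{j}q^{-\binom{j+1}{2}}(q;q)_{j}(q;q)_{n-j}$.

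Substituting back yields $A_{j}=(-1)^{j}q^{\binom{j+1}{2}}Q(q^{-j})/\bigl[(q;q)_{j}(q;q)_{n-j}\bigr]$, and multiplying the whole decomposition through by $(q;q)_{n}$ turns the ratio of $q$-factorials $(q;q)_{n}/[(q;q)_{j}(q;q)_{n-j}]$ into ${n\brack j}$, giving the stated identity. The main (and only mild) obstacle is the careful bookkeeping of the sign and the power of $q$ arising from the negative exponents $q^{-l}$; once the identity $\sum_{l=1}^{j}l=\binom{j+1}{2}$ is applied everything falls into place, so no serious difficulty remains.
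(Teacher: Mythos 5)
Your proposal is correct and its first paragraph is exactly the paper's (implicit) proof: the corollary is the $m=1$ specialization of Theorem~\ref{thm: bbbb}, with the triple sum collapsing to the single term $i=k=0$ precisely as you describe. Your supplementary direct argument via partial fractions and the evaluation $\prod_{i=0,\,i\neq j}^{n}(1-q^{i-j})=(-1)^{j}q^{-\binom{j+1}{2}}(q;q)_{j}(q;q)_{n-j}$ is also correct, and is in essence the same residue computation that underlies the general theorem, so nothing further is needed.
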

The limiting case of Theorem~\eqref{thm: bbbb}, is the following
\begin{theorem}\label{thm: aaaa}
Let $m$ and $n$ be two positive integers. Let $Q(x)$ be a polynomial of degree $l$ with leading coefficient $a_{l}$. Then we have the following curious identity:
\begin{align*}
\sum_{j=0}^{n}{n \brack j}^{m}q^{m\binom{j}{2}}\sum_{i=0}^{m-1}\frac{(-1)^{mj+i+1}\mathbf{B}_{m-1-i}(x_{1},x_{2},\cdots,x_{m-1-i})Q^{(i)}(q^{-j})}{i!(m-1-i)!}=\\ \displaystyle\begin{cases}
0 &\text{if}\quad 0\leq l< m(n+1)-1 ,\\
(-1)^{m(n+1)}(q;q)_{n}^{m}q^{-m\binom{n+1}{2}}a_{l} &\text{if}\quad l=m(n+1)-1.
\end{cases}
\end{align*}
where
\begin{equation*}
x_{l}=m(l-1)!\sum_{i=0,i\neq j}^{n}\frac{q^{jl}}{(1-q^{j-i})^{l}}
\end{equation*}
\end{theorem}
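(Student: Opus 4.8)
The plan is to derive Theorem~\ref{thm: aaaa} as a limiting case of Theorem~\ref{thm: bbbb}, exactly as the phrase ``the limiting case of Theorem~\eqref{thm: bbbb}'' suggests. The starting point is the identity
\begin{equation*}
\frac{(q;q)_{n}^{m}Q(x)}{(x;q)_{n+1}^{m}}=\sum_{j=0}^{n}{n \brack j}^{m}q^{m\binom{j+1}{2}}\sum_{i=0}^{m-1}\sum_{k=0}^{m-1-i}\frac{(-1)^{mj+i}\mathbf{B}_{k}(x_{1},\cdots,x_{k})Q^{(i)}(q^{-j})}{i!k!q^{j(i+k)}(1-xq^{j})^{m-i-k}}.
\end{equation*}
The key observation is that when $\deg(Q)=l<m(n+1)$, both sides are rational functions in $x$ whose denominators have degree $m(n+1)$. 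I would multiply both sides by $x^{m(n+1)}$ and let $x\to\infty$, so that only the highest-order behaviour survives. On the left, $(x;q)_{n+1}^{m}=\prod_{j=0}^{n}(1-xq^{j})^{m}$ has leading term $(-1)^{m(n+1)}q^{m\binom{n+1}{2}}x^{m(n+1)}$, while $Q(x)\sim a_{l}x^{l}$; on the right, each summand $(1-xq^{j})^{-(m-i-k)}$ times $x^{m(n+1)}$ blows up unless the total denominator degree across the product is maximal.

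The cleaner route, which I expect to be the intended one, is to extract the coefficient of $1/x$ in a suitable expansion, or equivalently to compare the behaviour as $x\to\infty$ after the substitution that isolates the residue at infinity. Concretely, I would expand each term $1/(1-xq^{j})^{m-i-k}$ for large $x$ and collect the coefficient of the single power $x^{-(m(n+1)-l)}$; because $k$ ranges only up to $m-1-i$, the maximal denominator degree $m-i-k$ contributing to the slowest decay forces $k=m-1-i$, which is precisely why the complete Bell polynomial appearing in the statement is $\mathbf{B}_{m-1-i}$ rather than a full double sum over $k$. Summing the inner denominator exponents gives $\sum_{i}(m-i-(m-1-i))=\sum_{i}1$, so each retained term decays like $x^{-(m(n+1)-l)}$ up to the $(1-xq^{j})^{-1}$ factors, and matching leading asymptotics in $x$ produces exactly the vanishing for $l<m(n+1)-1$ and the nonzero value $(-1)^{m(n+1)}(q;q)_{n}^{m}q^{-m\binom{n+1}{2}}a_{l}$ when $l=m(n+1)-1$.

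To execute this I would first fix $k=m-1-i$ in the inner sum (the only surviving term in the limit), so that the exponent $m-i-k$ collapses to $1$ and the factor $q^{-j(i+k)}=q^{-j(m-1)}$ combines with $q^{m\binom{j+1}{2}}$ to give $q^{m\binom{j}{2}}$ after using $\binom{j+1}{2}=\binom{j}{2}+j$; this accounts for the shift from $q^{m\binom{j+1}{2}}$ in Theorem~\ref{thm: bbbb} to $q^{m\binom{j}{2}}$ in the target, and produces the sign $(-1)^{mj+i}$ together with the $\mathbf{B}_{m-1-i}$ factor. Next I would compute $\lim_{x\to\infty} x\cdot\frac{(q;q)_{n}^{m}Q(x)}{(x;q)_{n+1}^{m}}$, which is $0$ when $l<m(n+1)-1$ and equals $(q;q)_{n}^{m}a_{l}\cdot\lim_{x\to\infty} x^{l-m(n+1)+1}/\big((-1)^{m(n+1)}q^{m\binom{n+1}{2}}\big)$ when $l=m(n+1)-1$, giving the stated right-hand side. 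Equating this limit with $\lim_{x\to\infty} x$ times the right-hand sum, where only the $(1-xq^{j})^{-1}$ contributions survive and each contributes $-q^{-j}$, yields the identity after collecting the extra sign $(-1)^{1}$ that turns $(-1)^{mj+i}$ into $(-1)^{mj+i+1}$.

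The main obstacle will be bookkeeping the powers of $q$ and the signs accumulated in the limit: one must correctly track the factor $q^{-j}$ coming from $\lim_{x\to\infty} x/(1-xq^{j})=-q^{-j}$ for each $j$, reconcile it with the exponent $q^{-j(i+k)}$ already present, and verify that the combination $q^{m\binom{j+1}{2}-j(m-1)}=q^{m\binom{j}{2}}$ is exactly right. A secondary subtlety is justifying that \emph{only} the $k=m-1-i$ term survives the $x\to\infty$ limit and that all terms with $k<m-1-i$ decay strictly faster; this follows because such terms carry a denominator of degree $m-i-k\ge 2$, hence contribute $O(x^{-(m(n+1)-l+1)})$ after multiplication by $x$, and vanish in the limit. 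Once these asymptotic comparisons are pinned down, the two cases $l<m(n+1)-1$ and $l=m(n+1)-1$ drop out immediately from matching the orders of decay on the two sides.
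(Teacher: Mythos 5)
Your proposal is correct and is exactly the paper's (essentially unstated) argument: the paper merely remarks that Theorem~\ref{thm: aaaa} is ``the limiting case'' of Theorem~\ref{thm: bbbb}, and your execution --- multiply by $x$, let $x\to\infty$, note that only the $k=m-1-i$ terms survive since $\lim_{x\to\infty}x/(1-xq^{j})^{m-i-k}=0$ for $m-i-k\ge 2$ and equals $-q^{-j}$ for $m-i-k=1$, then compare with the leading behaviour of $Q(x)/(x;q)_{n+1}^{m}$ --- is precisely that limit carried out in detail. The only blemish is the displayed claim $q^{m\binom{j+1}{2}-j(m-1)}=q^{m\binom{j}{2}}$, which as written is off by $q^{j}$; the missing $q^{-j}$ is exactly the one you already extracted from $\lim_{x\to\infty}x/(1-xq^{j})=-q^{-j}$, so the net bookkeeping is right.
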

When $m=1$, the formula of Theorem~\eqref{thm: aaaa} reads explicitly as
\begin{corollary}
Let $n$ be a positive integer and $Q(x)$ be a polynomial of degree $l$ with leading coefficient $a_{l}$. Then we have the following identity:
\begin{align*}
\sum_{j=0}^{n}{n \brack j}q^{\binom{j}{2}}(-1)^{j}Q(q^{-j})=\displaystyle\begin{cases}
0 &\text{if}\quad 0\leq l< n ,\\
(-1)^{n}(q;q)_{n}q^{-\binom{n+1}{2}}a_{l} &\text{if}\quad l=n.
\end{cases}
\end{align*}
\end{corollary}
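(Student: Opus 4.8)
The plan is to linearize the sum and then reduce everything to the Gauss $q$-binomial theorem together with the vanishing of certain $q$-shifted factorials. Writing $Q(x)=\sum_{r=0}^{l}c_{r}x^{r}$ with $c_{l}=a_{l}$, the left-hand side becomes $\sum_{r=0}^{l}c_{r}\,S_{r}$, where $S_{r}=\sum_{j=0}^{n}(-1)^{j}{n\brack j}q^{\binom{j}{2}}q^{-jr}$; so by linearity it suffices to evaluate each $S_{r}$.

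The key ingredient is the finite $q$-binomial theorem $\sum_{j=0}^{n}(-1)^{j}{n\brack j}q^{\binom{j}{2}}t^{j}=(t;q)_{n}$, which with $t=q^{-r}$ gives $S_{r}=(q^{-r};q)_{n}=\prod_{k=0}^{n-1}(1-q^{\,k-r})$. The crucial observation is that this product contains the factor $1-q^{0}=0$ as soon as $0\le r\le n-1$; hence $S_{r}=0$ for every $r<n$, which simultaneously kills all of the lower-order terms.

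The two cases then fall out at once. If $l<n$ then every index $r\le l$ satisfies $r<n$, so the whole sum is $0$. If $l=n$ only the top term survives, leaving $a_{l}\,(q^{-n};q)_{n}$; the final step is to evaluate this single factorial. Factoring $1-q^{\,k-n}=-q^{-(n-k)}\bigl(1-q^{\,n-k}\bigr)$ and reindexing by $s=n-k$ gives $(q^{-n};q)_{n}=(-1)^{n}q^{-\binom{n+1}{2}}(q;q)_{n}$, which is exactly the asserted value $(-1)^{n}(q;q)_{n}q^{-\binom{n+1}{2}}a_{l}$.

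I expect the only real obstacle to be the bookkeeping in this last evaluation—tracking the global sign $(-1)^{n}$ and the exponent $\binom{n+1}{2}=1+2+\cdots+n$ produced by factoring each term—rather than anything structural. As a cross-check, the same identity arises by multiplying the partial-fraction expansion of Corollary~\ref{cor: SSS} by $x$ and letting $x\to\infty$: each $x/(1-xq^{j})\to -q^{-j}$ turns $q^{\binom{j+1}{2}}$ into $q^{\binom{j}{2}}$ via $\binom{j+1}{2}-j=\binom{j}{2}$, while the left side is governed by the leading coefficient $(-1)^{n+1}q^{\binom{n+1}{2}}$ of $(x;q)_{n+1}$; this also exhibits the statement as the $m=1$ instance of Theorem~\ref{thm: aaaa}, where $\mathbf{B}_{0}=1$ collapses the inner double sum to a single term.
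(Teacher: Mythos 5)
Your argument is correct, but it takes a genuinely different and more elementary route than the paper. The paper obtains this corollary as the $m=1$ specialization of Theorem~\ref{thm: aaaa}, which is itself extracted as a limiting case of the partial-fraction expansion of Theorem~\ref{thm: bbbb} (equivalently, of Corollary~\ref{cor: SSS} after multiplying by $x$ and letting $x\to\infty$) --- exactly the cross-check you sketch in your last paragraph. Your primary proof instead bypasses the partial-fraction machinery entirely: you expand $Q$ into monomials and invoke the finite Gauss $q$-binomial theorem $\sum_{j=0}^{n}(-1)^{j}{n\brack j}q^{\binom{j}{2}}t^{j}=(t;q)_{n}$ at $t=q^{-r}$, so that each power sum equals $(q^{-r};q)_{n}=\prod_{k=0}^{n-1}(1-q^{k-r})$, which vanishes for $0\le r\le n-1$ because of the factor $1-q^{0}$, and for $r=n$ evaluates to $(-1)^{n}q^{-\binom{n+1}{2}}(q;q)_{n}$ by the factorization $1-q^{k-n}=-q^{-(n-k)}(1-q^{n-k})$ (the exponent bookkeeping $\sum_{k=0}^{n-1}(n-k)=\binom{n+1}{2}$ checks out). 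What your route buys is self-containedness and transparency: the two cases of the identity are visible immediately from the zero of a single $q$-shifted factorial, at the cost of relying on the classical $q$-binomial theorem as an external input and of not generalizing to the higher-multiplicity cases $m\ge 2$, which is precisely what the paper's Bell-polynomial framework is built to handle.
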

\begin{identity}
Letting $Q(x)=(x-1)^{l}$ in the last corollary, we obtain, after some simple calculations, that
\begin{align*}
\sum_{j=0}^{n}{n \brack j}q^{\binom{j}{2}-jl}(-1)^{j}[j]^{l}=\displaystyle\begin{cases}
0 &\text{if}\quad 0\leq l< n ,\\
(-1)^{n}\dfrac{(q;q)_{n}}{(1-q)^{n}} q^{-\binom{n+1}{2}} &\text{if}\quad l=n.
\end{cases}
\end{align*}
We remark that in the limiting case $q\rightarrow 1$ of this identity, we obtain the famous Euler's formula\cite{Halzer,Hwgould2,Eakaratsuba,Cphoata}. Therefore,
this identity is the q-analog of Euler's formula:
\begin{align*}
\sum_{j=0}^{n}\binom{n}{j}(-1)^{j}j^{l}=\displaystyle\begin{cases}
0 &\text{if}\quad 0\leq l< n ,\\
(-1)^{n}n! &\text{if}\quad l=n.
\end{cases}
\end{align*}
\end{identity}
Opting $m=2$ in Theorem~\eqref{thm: aaaa}, we gain the following important result
\begin{corollary}\label{cor: dsdf}
Let $n$ be a positive integer and $Q(x)$ be a polynomial of degree $l$ with leading coefficient $a_{l}$. Then the following identity holds:
\begin{align*}
\sum_{j=0}^{n}{n \brack j}^{2}q^{j(j-1)}\bigg(Q^{'}(q^{-j})-\frac{2q^{j}Q(q^{-j})}{1-q}\bigg(\mathcal{H}_{j}(q)-q\mathcal{H}_{n-j}(q^{-1})\bigg)\bigg)=\displaystyle\begin{cases}
0 &\text{if}\quad 0\leq l< 2n+1 ,\\
(q;q)_{n}^{2}q^{-n(n+1)}a_{l} &\text{if}\quad l=2n+1.
\end{cases}
\end{align*}
\end{corollary}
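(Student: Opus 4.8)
The plan is to specialize Theorem~\eqref{thm: aaaa} to the case $m=2$ and then make every ingredient explicit. First I would write down the left-hand side of Theorem~\eqref{thm: aaaa} with $m=2$: the outer weight becomes $q^{2\binom{j}{2}}=q^{j(j-1)}$, and the inner sum now runs only over $i\in\{0,1\}$. The two surviving terms involve the complete Bell polynomials $\mathbf{B}_{1-i}$, which at these low orders are completely explicit, namely $\mathbf{B}_{0}=1$ and $\mathbf{B}_{1}(x_{1})=x_{1}$. Tracking the sign factor $(-1)^{2j+i+1}$ (and using $(-1)^{2j}=1$), the $i=1$ term contributes $Q'(q^{-j})$ while the $i=0$ term contributes $-x_{1}Q(q^{-j})$, so the inner sum collapses to $Q'(q^{-j})-x_{1}Q(q^{-j})$.

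The second step is to convert $x_{1}$ into the $q$-harmonic form appearing in the statement. With $m=2$ and $l=1$ the defining formula gives $x_{1}=2\sum_{i=0,\,i\neq j}^{n}\frac{q^{j}}{1-q^{j-i}}$. Splitting this sum into the ranges $i<j$ and $i>j$, substituting $k=j-i$ in the first and $k=i-j$ in the second, and using the elementary conversions $\sum_{k=1}^{j}\frac{1}{1-q^{k}}=\frac{1}{1-q}\mathcal{H}_{j}(q)$ together with $\frac{1}{1-q^{-k}}=-\frac{q^{k}}{1-q^{k}}$ and $\sum_{k=1}^{n-j}\frac{q^{k}}{1-q^{k}}=\frac{q}{1-q}\mathcal{H}_{n-j}(q^{-1})$ (all immediate from the definitions of $[k]_{q}$ and of $\mathcal{H}$ evaluated at $q$ and $q^{-1}$), I obtain $x_{1}=\frac{2q^{j}}{1-q}\big(\mathcal{H}_{j}(q)-q\mathcal{H}_{n-j}(q^{-1})\big)$. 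This is exactly the expression for $x_{1}$ already recorded for $m=2$ in Theorem~\eqref{thm: bbbb}, so one may simply quote it rather than re-derive it. Substituting into $-x_{1}Q(q^{-j})$ then reproduces the bracketed factor $Q'(q^{-j})-\frac{2q^{j}Q(q^{-j})}{1-q}\big(\mathcal{H}_{j}(q)-q\mathcal{H}_{n-j}(q^{-1})\big)$ of the statement.

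Finally I would simplify the right-hand side. The degree threshold becomes $m(n+1)-1=2n+1$, which matches the case split $0\le l<2n+1$ versus $l=2n+1$. In the nonzero case the prefactor is $(-1)^{2(n+1)}(q;q)_{n}^{2}q^{-2\binom{n+1}{2}}a_{l}$, and since $(-1)^{2(n+1)}=1$ and $2\binom{n+1}{2}=n(n+1)$, this equals $(q;q)_{n}^{2}q^{-n(n+1)}a_{l}$, as claimed. The whole argument is a direct substitution into Theorem~\eqref{thm: aaaa}, so there is no genuine obstacle; the only points demanding care are the sign and exponent bookkeeping in passing from $\binom{j}{2}$ and $\binom{n+1}{2}$ to $j(j-1)$ and $n(n+1)$, and the conversion of the rational sum defining $x_{1}$ into the $q$-harmonic numbers $\mathcal{H}_{j}(q)$ and $\mathcal{H}_{n-j}(q^{-1})$.
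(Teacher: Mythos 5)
Your proposal is correct and follows exactly the route the paper intends: the paper states this corollary with no written proof beyond the phrase ``Opting $m=2$ in Theorem~\ref{thm: aaaa}'', and your specialization --- the collapse of the inner sum to $Q'(q^{-j})-x_{1}Q(q^{-j})$ via $\mathbf{B}_{0}=1$, $\mathbf{B}_{1}(x_{1})=x_{1}$, the identification of $x_{1}$ with $\tfrac{2q^{j}}{1-q}\bigl(\mathcal{H}_{j}(q)-q\mathcal{H}_{n-j}(q^{-1})\bigr)$ as already recorded in Theorem~\ref{thm: bbbb}, and the simplifications $q^{2\binom{j}{2}}=q^{j(j-1)}$, $2\binom{n+1}{2}=n(n+1)$ --- is precisely that computation, carried out correctly.
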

\begin{identity}
If $Q(x)=1$, then the formula of Corollary~\eqref{cor: dsdf} gives
\begin{equation*}
\sum_{j=0}^{n}{n \brack j}^{2}q^{j^{2}}\bigg(\mathcal{H}_{j}(q)-q\mathcal{H}_{n-j}(q^{-1})\bigg)=0
\end{equation*}
\end{identity}
\begin{identity}
Choosing $Q(x)=(1-x)(x-q)^{2}\cdots(x-q^{n})^{2}$ in Corollary~\eqref{cor: dsdf}, we obtain
\begin{equation*}
\sum_{j=0}^{n}{n \brack j}^{2}{n+j \brack j}^{2}q^{j(j-1)-2nj}\bigg(1+2[j]\mathcal{H}_{n+j}(q)-4[j]\mathcal{H}_{j}(q)+2q\mathcal{H}_{n-j}(q^{-1})\bigg)\bigg)=q^{-n(n+1)}
\end{equation*}
\end{identity}
It would be interesting to establish some identities by means of Corollary~\eqref{cor: SSS}.
\begin{theorem}
Let $n$ be a positive integer and $y$ any complex number. Then the following identity holds true:
\begin{equation*}
\sum_{j=0}^{n}(-1)^{j}{n \brack j }q^{\binom{j+1}{2}-jn}\frac{(y;q)_{n+j}}{(y;q)_{j}(1-xq^{j})}=\frac{(q;q)_{n}(y/x;q)_{n}x^{n}}{(x;q)_{n+1}}.
\end{equation*}
\end{theorem}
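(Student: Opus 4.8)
The plan is to apply Corollary~\ref{cor: SSS} to a suitably chosen polynomial $Q$ of degree at most $n$. Examining the identity to be proved, the factor multiplying $(1-xq^{j})^{-1}$ on the left-hand side is $q^{-jn}(y;q)_{n+j}/(y;q)_{j}$, whereas in Corollary~\ref{cor: SSS} the corresponding factor is $Q(q^{-j})$. This strongly suggests that I should look for a polynomial $Q$ satisfying $Q(q^{-j}) = q^{-jn}(y;q)_{n+j}/(y;q)_{j}$ for every $j$, and then simply read off the right-hand side of the theorem from the left-hand side of the corollary.

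To find $Q$, I would use the splitting $(y;q)_{n+j} = (y;q)_{j}\,(yq^{j};q)_{n}$, which reduces the target values to
\[
Q(q^{-j}) = q^{-jn}(yq^{j};q)_{n} = \prod_{k=0}^{n-1} q^{-j}\bigl(1 - yq^{j+k}\bigr) = \prod_{k=0}^{n-1}\bigl(q^{-j} - yq^{k}\bigr).
\]
Since the last product is precisely $\prod_{k=0}^{n-1}(x - yq^{k})$ evaluated at $x = q^{-j}$, the natural candidate is $Q(x) = \prod_{k=0}^{n-1}(x - yq^{k}) = x^{n}(y/x;q)_{n}$, a polynomial of degree exactly $n$, so that the hypothesis $\deg(Q) < n+1$ of Corollary~\ref{cor: SSS} is satisfied.

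With this choice the left-hand side of Corollary~\ref{cor: SSS} becomes $(q;q)_{n}Q(x)/(x;q)_{n+1} = (q;q)_{n}x^{n}(y/x;q)_{n}/(x;q)_{n+1}$, which is exactly the right-hand side of the theorem, while the right-hand side of the corollary, after substituting the computed value $Q(q^{-j}) = q^{-jn}(y;q)_{n+j}/(y;q)_{j}$, reproduces the left-hand side of the theorem term by term. This closes the argument.

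There is essentially no hard step: the entire content is the recognition of the correct polynomial $Q$, and thereafter the theorem is an immediate specialization of Corollary~\ref{cor: SSS}. The only points deserving care are the routine q-shifted-factorial manipulations, namely verifying both $\prod_{k=0}^{n-1}(x - yq^{k}) = x^{n}(y/x;q)_{n}$ and the evaluation $Q(q^{-j}) = q^{-jn}(y;q)_{n+j}/(y;q)_{j}$. I would also remark that $Q$ is defined as a genuine polynomial through the product $\prod_{k=0}^{n-1}(x-yq^{k})$, so no division by $(y;q)_{j}$ enters its definition; the quotient form $q^{-jn}(y;q)_{n+j}/(y;q)_{j}$ is merely a convenient rewriting of the finite quantity $Q(q^{-j})$, valid whenever $(y;q)_{j}\neq 0$, and the stated identity is to be understood in this sense (or by continuity in $y$ at the exceptional values $y=q^{-i}$).
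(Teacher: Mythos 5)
Your proposal is correct and is essentially identical to the paper's own proof: the authors likewise take $Q(x)=(x-y)(x-qy)\cdots(x-yq^{n-1})=x^{n}(y/x;q)_{n}$, note $Q(q^{-j})=q^{-jn}(yq^{j};q)_{n}=q^{-jn}(y;q)_{n+j}/(y;q)_{j}$, and apply Corollary~\ref{cor: SSS}. Your added remark about interpreting the quotient $(y;q)_{n+j}/(y;q)_{j}$ when $(y;q)_{j}=0$ is a small but sensible refinement the paper omits.
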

\begin{proof}
Let $Q(x)$ be the polynomial 
\begin{equation*}
Q(x)=(x-y)(x-qy)\cdots(x-yq^{n-1})=(y/x;q)_{n}x^{n}
\end{equation*}
Since
\begin{equation*}
Q(q^{-j})=(yq^{j};q)_{n}q^{-jn} \,\,\ \mbox{and} \,\,\ (yq^{j};q)_{n}=\frac{(y;q)_{n+j}}{(y;q)_{j}}
\end{equation*}
we get form Corollary~\eqref{cor: SSS} the desired formula.
\end{proof}
Letting $y=q$ in the last theorem, we obtain the following result
\begin{theorem}\label{th: jjj}
Let $n$ be a positive integer, the following identities hold true
\begin{enumerate}
\item
\begin{equation*}
\sum_{j=0}^{n}(-1)^{j}{n \brack j}{n+j \brack j}\frac{q^{\binom{j+1}{2}-jn}}{(1-xq^{j})}=\frac{x^{n}(q/x;q)_{n}}{(x;q)_{n+1}}.
\end{equation*}
\item 
\begin{equation*}
\sum_{j=0}^{n}(-1)^{j}{n \brack j}{n+j \brack j}\frac{q^{\binom{j+1}{2}-jn}}{(1-q^{j+l})}=0.
\end{equation*}
for $ l=1,2,\ldots,n.$
\item 
\begin{equation*}
\sum_{j=0}^{n-1}(-1)^{n+1+j}{n \brack j}{n+j \brack j}\frac{(1-q^{n+l})}{(1-q^{j+l})}q^{\binom{j+1}{2}+n^{2}-jn}={2n \brack n}q^{\binom{n+1}{2}}.
\end{equation*}
for $ l=1,2,\ldots,n.$
\item 
\begin{equation*}
q^{\binom{n+1}{2}}=\sum_{j=0}^{n}(-1)^{n-j}{n \brack j}{n+j \brack j}q^{\binom{j+1}{2}-jn}.
\end{equation*}
\end{enumerate}
\end{theorem}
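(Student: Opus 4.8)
The plan is to derive all four identities from the preceding theorem, which I abbreviate as
\begin{equation*}
\sum_{j=0}^{n}(-1)^{j}{n \brack j}q^{\binom{j+1}{2}-jn}\frac{(y;q)_{n+j}}{(y;q)_{j}(1-xq^{j})}=\frac{(q;q)_{n}(y/x;q)_{n}x^{n}}{(x;q)_{n+1}}.
\end{equation*}
First I would obtain part (1) by specializing $y=q$. Since ${n+j \brack j}=\frac{(q;q)_{n+j}}{(q;q)_{j}(q;q)_{n}}$, we have $\frac{(q;q)_{n+j}}{(q;q)_{j}}=(q;q)_{n}{n+j \brack j}$, so the left-hand side becomes $(q;q)_{n}\sum_{j}(-1)^{j}{n \brack j}{n+j \brack j}\frac{q^{\binom{j+1}{2}-jn}}{1-xq^{j}}$, while the right-hand side is $(q;q)_{n}\frac{x^{n}(q/x;q)_{n}}{(x;q)_{n+1}}$; dividing through by $(q;q)_{n}$ yields part (1).

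The key preparatory step for the remaining parts is to rewrite the right-hand side of part (1) in purely polynomial form,
\begin{equation*}
\frac{x^{n}(q/x;q)_{n}}{(x;q)_{n+1}}=\frac{\prod_{k=1}^{n}(x-q^{k})}{\prod_{k=0}^{n}(1-q^{k}x)},
\end{equation*}
which makes the upcoming evaluations transparent. For part (2), I would substitute $x=q^{l}$ with $1\le l\le n$. The numerator $\prod_{k=1}^{n}(q^{l}-q^{k})$ then contains the vanishing factor $(q^{l}-q^{l})$, while every denominator $1-q^{j+l}$ (for $0\le j\le n$) is a nonzero element since $j+l\ge 1$; hence the right-hand side is $0$ and part (2) follows. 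For part (4), I would instead set $x=0$: the left-hand side collapses to $\sum_{j}(-1)^{j}{n \brack j}{n+j \brack j}q^{\binom{j+1}{2}-jn}$, and the polynomial form gives $\prod_{k=1}^{n}(-q^{k})=(-1)^{n}q^{\binom{n+1}{2}}$ on the right. Multiplying by $(-1)^{n}$ and using $(-1)^{n+j}=(-1)^{n-j}$ produces part (4).

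Finally, part (3) is obtained from part (2) by isolating its top term. Writing the vanishing sum of part (2) as a sum over $0\le j\le n-1$ plus the $j=n$ term, and noting ${n \brack n}=1$, the $j=n$ contribution equals $(-1)^{n}{2n \brack n}\frac{q^{\binom{n+1}{2}-n^{2}}}{1-q^{n+l}}$. Solving for the truncated sum, then clearing the factor $1-q^{n+l}$, multiplying by $q^{n^{2}}$, and multiplying through by $(-1)^{n+1}$ to match the sign pattern $(-1)^{n+1+j}$, one arrives precisely at
\begin{equation*}
\sum_{j=0}^{n-1}(-1)^{n+1+j}{n \brack j}{n+j \brack j}\frac{(1-q^{n+l})}{1-q^{j+l}}q^{\binom{j+1}{2}+n^{2}-jn}={2n \brack n}q^{\binom{n+1}{2}}.
\end{equation*}

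None of these steps presents a genuine obstacle; the work is entirely bookkeeping. The only points requiring care are the polynomial reformulation of the right-hand side (so that the specializations $x=q^{l}$ and $x=0$ can be read off directly) and the tracking of the signs and powers of $q$ in part (3), where an off-by-one between $\binom{n+1}{2}$ and $\binom{j+1}{2}$, or a slip in the parity of $(-1)^{n+1+j}$, would corrupt the final form.
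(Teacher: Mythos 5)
Your proposal is correct and follows essentially the same route as the paper: the paper obtains the whole theorem by setting $y=q$ in the preceding theorem (giving part (1)), with parts (2)--(4) then read off by the specializations $x=q^{l}$ and $x=0$ and by isolating the $j=n$ term, exactly as you do. The sign and exponent bookkeeping in your parts (3) and (4) checks out.
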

Setting $x=a$ in the first statement of Theorem~\eqref{th: jjj}, we recover Theorem $2$ of~\cite{Ekilic}.\\
The follwoing result is an extension of Theorem $5$ of \cite{Qyan}.
\begin{theorem}\label{thm: hhh}
Let $n$ be a positive integer. For $m=1,2,\ldots,n$, we have
\begin{equation*}
\sum_{j=0}^{n}(-1)^{j}{n \brack j}{n+j \brack j}q^{\binom{j}{2}-jn}(\mathcal{H}_{m+j,q}(x)-\mathcal{H}_{j,q}(x))=(-1)^{n}q^{\binom{n+1}{2}}\sum_{i=1}^{m}q^{i}\frac{(xq^{i-n};q)_{n}}{(xq^{i};q)_{n+1}}.
\end{equation*}
where
\begin{equation*}
\mathcal{H}_{n,q}(x)=\sum_{i=1}^{n}\frac{q^{i}}{1-xq^{i}}
\end{equation*}
\end{theorem}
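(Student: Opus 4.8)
The plan is to reduce the claim to the first statement of Theorem~\eqref{th: jjj} by exploiting the telescoping structure of the sum $\mathcal{H}_{n,q}(x)$. First I would rewrite the inner factor: from the definition $\mathcal{H}_{n,q}(x)=\sum_{i=1}^{n}\frac{q^{i}}{1-xq^{i}}$, the difference telescopes into a short tail,
\begin{equation*}
\mathcal{H}_{m+j,q}(x)-\mathcal{H}_{j,q}(x)=\sum_{i=j+1}^{m+j}\frac{q^{i}}{1-xq^{i}}=\sum_{i=1}^{m}\frac{q^{i+j}}{1-xq^{i+j}},
\end{equation*}
where the last equality is the shift $i\mapsto i+j$. (The case $j=0$ is covered by the empty-sum convention $\mathcal{H}_{0,q}(x)=0$.)

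Next I would substitute this expression into the left-hand side and interchange the two finite sums, pulling the factor $q^{i}$ outside. Using $\binom{j}{2}+j=\binom{j+1}{2}$ to absorb the extra $q^{j}$ coming from the numerator $q^{i+j}$, the left-hand side becomes
\begin{equation*}
\sum_{i=1}^{m}q^{i}\sum_{j=0}^{n}(-1)^{j}{n \brack j}{n+j \brack j}\frac{q^{\binom{j+1}{2}-jn}}{1-xq^{j+i}}.
\end{equation*}
The inner sum is precisely the left-hand side of the first statement of Theorem~\eqref{th: jjj} with $x$ replaced by $xq^{i}$, so it equals $\dfrac{(xq^{i})^{n}(q^{1-i}/x;q)_{n}}{(xq^{i};q)_{n+1}}$.

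Finally I would convert each summand into the desired shape. The remaining step is the reflection identity
\begin{equation*}
(xq^{i})^{n}(q^{1-i}/x;q)_{n}=(-1)^{n}q^{\binom{n+1}{2}}(xq^{i-n};q)_{n},
\end{equation*}
which one verifies by writing $(xq^{i})^{n}(q^{1-i}/x;q)_{n}=\prod_{k=0}^{n-1}(xq^{i}-q^{k+1})$, factoring $-q^{k+1}$ out of each factor (producing the sign $(-1)^{n}$ and the power $q^{\binom{n+1}{2}}$), and then reversing the product index to recognize $(xq^{i-n};q)_{n}$. Multiplying by $q^{i}$ and retaining the common denominator $(xq^{i};q)_{n+1}$ reproduces the right-hand side term by term, and summing over $i=1,\dots,m$ completes the argument.

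I expect the only delicate point to be the bookkeeping of the powers of $q$ in the reflection step and the correct matching of $\binom{j}{2}$ versus $\binom{j+1}{2}$ after the interchange of summation; everything else is a mechanical telescoping, reindexing, and a direct appeal to the already-established Theorem~\eqref{th: jjj}. Note that the hypothesis $m\le n$ is not actually needed for this reduction, so the identity in fact holds for every positive integer $m$.
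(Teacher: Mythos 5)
Your proof is correct and is essentially the paper's own argument: the telescoping of $\mathcal{H}_{m+j,q}(x)-\mathcal{H}_{j,q}(x)$, the interchange of sums, and the appeal to the first identity of Theorem~\ref{th: jjj} with $x\mapsto xq^{i}$ reproduce exactly the paper's intermediate identity, and your reflection step $(xq^{i})^{n}(q^{1-i}/x;q)_{n}=(-1)^{n}q^{\binom{n+1}{2}}(xq^{i-n};q)_{n}$ is the same as the paper's $(q/xq^{i};q)_{n}=(-1)^{n}q^{n-in}x^{-n}q^{\binom{n}{2}}(xq^{i-n};q)_{n}$ after clearing the prefactor. Your added observation that the hypothesis $m\le n$ is not actually used is also accurate.
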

\begin{proof}
By the first identity of Theorem~\eqref{th: jjj}, it is easy to check
\begin{equation*}
\sum_{j=0}^{n}(-1)^{j}{n \brack j}{n+j \brack j}q^{\binom{j}{2}-jn}(\mathcal{H}_{m+j,q}(x)-\mathcal{H}_{j,q}(x))=\sum_{i=1}^{m}q^{i}\frac{(xq^{i})^{n}(q/xq^{i};q)_{n}}{(xq^{i};q)_{n+1}}.
\end{equation*}
By means of
\begin{equation*}
(q/xq^{i};q)_{n}=(-1)^{n}q^{n-in}x^{-n}q^{\binom{n}{2}}(xq^{i-n};q)_{n}
\end{equation*}
we deduce the result.
\end{proof}
\section*{Declarations}
\subsection*{Ethical Approval}  
Not applicable.
\subsection*{Competing interests} 
No potential conflict of interest was reported by the authors.
\subsection*{Authors' contributions} 
The authors contributed equally.
\subsection*{Funding} 
This research received no funding.
\subsection*{Availability of data and materials}
Not applicable.


\end{document}